\numberwithin{equation}{section}
\theoremstyle{plain}
\newtheorem{theorem}{Theorem}[section]
\newtheorem{lemma}[theorem]{Lemma}
\newtheorem{proposition}[theorem]{Proposition}
\newtheorem{corollary}[theorem]{Corollary}
\theoremstyle{definition}
\newtheorem{remark}[theorem]{Remark}
\begin{document}

\title[Jordan Derivations of Incidence Algebras]
{Jordan Derivations of Incidence Algebras}

\author{Zhankui Xiao}

\address{Xiao: School of Mathematical Sciences, Huaqiao University,
Quanzhou, Fujian, 362021, P. R. China}

\email{zhkxiao@gmail.com}

\begin{abstract}
Let $\mathcal{R}$ be a commutative ring with identity, $I(X,\mathcal{R})$ be the incidence
algebra of a locally finite pre-ordered set $X$. In this note, we characterise the derivations
of $I(X,\mathcal{R})$ and prove that every Jordan derivation of $I(X,\mathcal{R})$ is a derivation
provided that $\mathcal{R}$ is $2$-torsion free.
\end{abstract}

\subjclass[2010]{16W10, 16W25, 47L35}

\keywords{derivation, Jordan derivation, incidence algebra}

\thanks{The work was supported by a research foundation of
NSFC. (Grant No. 11226068).}

\maketitle

\section{Introduction}\label{xxsec1}

Let $\mathcal{R}$ be a commutative ring with identity, $A$ be an algebra over $\mathcal{R}$.
An $\mathcal{R}$-linear mapping $D: A\rightarrow A$ is called a derivation if $D(xy)=D(x)y+xD(y)$
for all $x,y\in A$, and is called a {\em Jordan derivation} if
$$
D(x^2)=D(x)x+xD(x)
$$
for all $x\in A$. There has been a great interest in the study of Jordan derivations of various algebras
in the last decades. The standard problem is to find out whether a Jordan derivation degenerate
to a derivation. Jacobson and Rickart \cite{JaRi} proved that every Jordan derivation of the full matrix algebra
over a $2$-torsion free unital ring is a derivation by relating the problem to the decomposition of Jordan
homomorphisms. In \cite{He}, Herstein showed that every Jordan derivation from a $2$-torsion free prime ring
into itself is also a derivation. These results have been extended to different rings and algebras in various
directions (see \cite{Br1,Br2,Cu,Lu,ZY} and the references therein). We would like to refer the reader to
Bre\v{s}ar's paper \cite{Br3} for a comprehensive and more detailed understanding of this topic.

We now recall the definition of incidence algebras. Let $(X,\leqslant)$ be a locally finite pre-ordered
set. This means $\leqslant$ is a reflexive and transitive binary relation on the set $X$, and for any
$x\leqslant y$ in $X$ there are only finitely many elements $z$ satisfying $x\leqslant z\leqslant y$.
The {\em incidence algebra} $I(X,\mathcal{R})$ of $X$ over $\mathcal{R}$ is defined on the set
$$
I(X,\mathcal{R}):=\{f: X\times X\longrightarrow \mathcal{R}\mid f(x,y)=0\ \text{if}\ x\nleqslant y\}
$$
with algebraic operation given by
$$\begin{aligned}
(f+g)(x,y)&=f(x,y)+g(x,y),\\
(rf)(x,y)&=rf(x,y),\\
(fg)(x,y)&=\sum_{x\leqslant z\leqslant y}f(x,z)g(z,y)
\end{aligned}$$
for all $f,g\in I(X,\mathcal{R})$, $r\in \mathcal{R}$ and $x,y,z\in X$. The product $(fg)$ is usually called
{\em convolution} in function theory. It would be helpful to point out that the full matrix algebra ${\rm M}_n(\mathcal{R})$
and the upper (or lower) triangular matrix algebras ${\rm T}_n(\mathcal{R})$ are special examples of incidence algebras

Incidence algebras were first considered by Ward \cite{Wa} as generalized algebras of arithmetic functions.
Rota and Stanley developed incidence algebras as the fundamental structures of enumerative combinatorial
theory and allied areas of arithmetic function theory (see \cite{SpDo}). Motivated by the results
of Stanley \cite{St}, automorphisms and other algebraic mappings of incidence algebras have been
extensively studied (see \cite{Ba,CoM,Jo,Ko,Ma,No1,No2,Sp} and the references therein). Baclawski
\cite{Ba} studied the automorphisms and derivations of incidence algebras $I(X,\mathcal{R})$ when $X$ is a locally
finite partially ordered set. Especially, he proved that every derivation of $I(X,\mathcal{R})$ with $X$ a locally
finite partially ordered set can be decomposed as a sum of an inner derivation and a transitive induced derivation
(see the definition in Section \ref{xxsec2}). Koppinen \cite{Ko} has extended these results to the incidence algebras
$I(X,\mathcal{R})$ with $X$ a locally finite pre-ordered set. In present paper, we first characterise the derivations
of $I(X,\mathcal{R})$ by a direct computation. Based on such characterization of derivations, we prove that every
Jordan derivation of $I(X,\mathcal{R})$ is a derivation
provided that $\mathcal{R}$ is $2$-torsion free.

\section{Derivations on Incidence Algebras}\label{xxsec2}

In this section, we characterise the derivations of the incidence algebras $I(X,\mathcal{R})$ where
$X$ is a locally finite pre-ordered set. We first single out some properties of $I(X,\mathcal{R})$ for later
use. One can find out all information in \cite{SpDo} and \cite{Ko}.

The identity element $\delta$ of $I(X,\mathcal{R})$ is given by $\delta(x,y)=\delta_{xy}$ for $x\leqslant y$,
where $\delta_{xy}\in \{0,1\}$ is the Kronecker delta. If $x,y\in X$ with $x\leqslant y$, let $e_{xy}$ be
defined by $e_{xy}(u,v)=1$ if $(u,v)=(x,y)$, and $e_{xy}(u,v)=0$ otherwise. Then $e_{xy}e_{uv}=\delta_{yu}e_{xv}$
by the definition of convolution. Moreover, the set $\mathfrak{B}:=\{e_{xy}\mid x\leqslant y\}$ forms an $\mathcal{R}$-linear
basis of $I(X,\mathcal{R})$. The following lemma, to some extent, is well-known and we give out the proof here
for completeness.

\begin{lemma}\label{xx2.1}
Let $A$ be an $\mathcal{R}$-algebra with an $\mathcal{R}$-linear basis $Y$. Then an $\mathcal{R}$-linear operator
$D: A\rightarrow A$ is a derivation if and only if
$$
D(xy)=D(x)y+xD(y), \hspace{6pt}\forall x,y\in Y.
$$
\end{lemma}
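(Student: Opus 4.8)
The plan is to exploit the $\mathcal{R}$-bilinearity of the multiplication on $A$ together with the $\mathcal{R}$-linearity of $D$: the derivation identity is "bilinear" in nature, so it suffices to verify it on pairs of basis vectors and then extend. The forward implication is trivial, since if $D$ is a derivation then in particular $D(xy)=D(x)y+xD(y)$ holds for all $x,y\in Y\subseteq A$.

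For the converse, fix arbitrary $a,b\in A$ and write them as finite $\mathcal{R}$-linear combinations $a=\sum_i r_i x_i$ and $b=\sum_j s_j y_j$ with $r_i,s_j\in\mathcal{R}$ and $x_i,y_j\in Y$. Since the multiplication of $A$ is $\mathcal{R}$-bilinear and $\mathcal{R}$ is commutative, $ab=\sum_{i,j}r_i s_j\, x_i y_j$. Applying the $\mathcal{R}$-linear operator $D$ and then invoking the hypothesis termwise gives
\[
D(ab)=\sum_{i,j}r_i s_j\, D(x_i y_j)=\sum_{i,j} r_i s_j\bigl(D(x_i)y_j+x_i D(y_j)\bigr).
\]
Splitting this into two sums and pulling the scalars back out by $\mathcal{R}$-bilinearity, the first group reassembles to $\bigl(\sum_i r_i D(x_i)\bigr)\bigl(\sum_j s_j y_j\bigr)=D(a)b$, using the $\mathcal{R}$-linearity of $D$ to recognise $\sum_i r_i D(x_i)=D(a)$, and the second group reassembles to $\bigl(\sum_i r_i x_i\bigr)\bigl(\sum_j s_j D(y_j)\bigr)=a\,D(b)$. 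Hence $D(ab)=D(a)b+a\,D(b)$ for all $a,b\in A$.

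There is no genuine obstacle here; the argument is a routine "extend by bilinearity" computation. The only points deserving a moment's attention are that all sums in play are finite, so each manipulation is legitimate, and that the commutativity of $\mathcal{R}$ is precisely what permits the scalars $r_i,s_j$ to be moved freely past elements of $A$ and past the operator $D$. Once these are noted, the identity on basis elements propagates verbatim to all of $A$, completing the proof.
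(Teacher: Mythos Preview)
Your proof is correct and follows essentially the same approach as the paper: both write arbitrary elements as finite $\mathcal{R}$-linear combinations of basis vectors, apply $D$ using linearity, invoke the hypothesis on basis pairs, and regroup to obtain $D(a)b+aD(b)$. Your added remarks about finiteness of the sums and the role of commutativity of $\mathcal{R}$ are helpful elaborations but do not change the argument.
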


\begin{proof}
We only need to prove the sufficiency. Let $a,b\in A$ and
$$
a=\sum_{x\in Y}C_xx, \hspace{6pt} b=\sum_{y\in Y}C'_yy,
$$
where $C_x,C'_y\in\mathcal{R}$. Then
$$\begin{aligned}
D(ab)&=D\big(\sum_{x,y\in Y}C_xC'_y xy \big)\\
&=\sum_{x,y\in Y}C_xC'_y(D(x)y+xD(y))\\
&=D\big(\sum_{x\in Y}C_xx\big)\big(\sum_{y\in Y}C'_yy\big)+\big(\sum_{x\in Y}C_xx\big)D\big(\sum_{y\in Y}C'_yy\big)\\
&=D(a)b+aD(b).
\end{aligned}$$
\end{proof}

Let $D: I(X,\mathcal{R})\rightarrow I(X,\mathcal{R})$ be an $\mathcal{R}$-linear operator. Motivated by the
above Lemma, we denote for all $i,j\in X$ with $i\leqslant j$
$$
D(e_{ij})=\sum_{e_{xy}\in \mathfrak{B}}C^{ij}_{xy}e_{xy}.
$$
We will use this notation for the rest of this paper.
For any $x\in X$, define $L_x$ and $R_x$ to be the subsets of $X$ by
$$
L_x:=\{i\in X\mid i\leqslant x, i\neq x\}\hspace{6pt}\text{and}\hspace{6pt} R_x:=\{j\in X\mid x\leqslant j, j\neq x\}.
$$
Note that the set $L_x\cap R_x$ maybe not empty since $X$ is a pre-ordered set.

\begin{theorem}\label{xx2.2}
Let $D: I(X,\mathcal{R})\rightarrow I(X,\mathcal{R})$ be an $\mathcal{R}$-linear operator. Then $D$ is a derivation
if and only if $D$ satisfies
$$
D(e_{ij})=\sum_{x\in {L_i}}C^{ii}_{xi}e_{xj}+C^{ij}_{ij}e_{ij}+\sum_{y\in {R_j}}C^{jj}_{jy}e_{iy} \eqno(1)
$$
for all $e_{ij}\in \mathfrak{B}$, where the coefficients $C^{ij}_{xy}$ are subject to the following relations
$$\begin{aligned}
C^{jj}_{jk}+C^{kk}_{jk}&=0, \hspace{6pt} \text{if}\hspace{6pt} j\leqslant k;\\
C^{ij}_{ij}+C^{jk}_{jk}&=C^{ik}_{ik}, \hspace{6pt} \text{if}\hspace{6pt} i\leqslant j,j\leqslant k.
\end{aligned}$$
\end{theorem}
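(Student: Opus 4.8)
The plan is to pass, via Lemma~\ref{xx2.1}, to the action of $D$ on the basis $\mathfrak{B}$ and to squeeze everything out of the multiplication rule $e_{xy}e_{uv}=\delta_{yu}e_{xv}$ combined with the identities $e_{ij}=e_{ii}e_{ij}=e_{ij}e_{jj}$ (for $i\leqslant j$), $e_{ij}e_{jk}=e_{ik}$ (for $i\leqslant j\leqslant k$), and $e_{jj}e_{kk}=0$ (for $j\neq k$).

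For necessity, assume $D$ is a derivation. Expanding $D(e_{ij})=D(e_{ii}e_{ij})=D(e_{ii})e_{ij}+e_{ii}D(e_{ij})$ in the basis and using the notation $D(e_{ab})=\sum C^{ab}_{xy}e_{xy}$ yields $\sum_{x\leqslant y}C^{ij}_{xy}e_{xy}=\sum_{x\leqslant i}C^{ii}_{xi}e_{xj}+\sum_{i\leqslant v}C^{ij}_{iv}e_{iv}$. Comparing the coefficient of a fixed basis vector $e_{pq}$ on both sides shows that $C^{ij}_{pq}$ vanishes unless $p=i$ or ($q=j$ and $p\leqslant i$), that $C^{ij}_{pj}=C^{ii}_{pi}$ when $p\neq i$, and (take $p=i$, $q=j$) that $C^{ii}_{ii}=0$. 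Running the mirror computation with $e_{ij}=e_{ij}e_{jj}$ gives the complementary statements: $C^{ij}_{pq}$ vanishes unless $q=j$ or ($p=i$ and $q\geqslant j$), $C^{ij}_{iq}=C^{jj}_{jq}$ when $q\neq j$, and $C^{jj}_{jj}=0$. Intersecting the two vanishing conditions leaves precisely the three families of coefficients displayed in (1), so $D$ has the asserted form.

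With (1) established, the two coefficient relations come from the remaining identities. Feeding $D(e_{jj}e_{kk})=0$ (for $j\neq k$) the expansions of $D(e_{jj})$ and $D(e_{kk})$ given by (1), all but two of the resulting products collapse because of the Kronecker delta, leaving $0=\bigl(C^{jj}_{jk}+C^{kk}_{jk}\bigr)e_{jk}$; this is the first relation, trivially satisfied unless $j\leqslant k$. Applying $D$ to $e_{ik}=e_{ij}e_{jk}$ for $i\leqslant j\leqslant k$, again using (1), and reading off the coefficient of $e_{ik}$ on each side gives $C^{ik}_{ik}=C^{ij}_{ij}+C^{jk}_{jk}$, the second relation. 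Conversely, if $D$ is the $\mathcal{R}$-linear operator determined by (1) with the two relations in force, then a short direct calculation shows that $D(e_{ij})e_{kl}+e_{ij}D(e_{kl})$ equals $0$ when $j\neq k$ (here the first relation is exactly what is needed) and equals $D(e_{il})$ when $j=k$ and $i\leqslant j\leqslant l$ (here the second relation converts the coefficient $C^{ij}_{ij}+C^{jl}_{jl}$ of $e_{il}$ into $C^{il}_{il}$, and the other terms reassemble into $D(e_{il})$ by (1)); Lemma~\ref{xx2.1} then upgrades this to the derivation property.

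I expect no genuine conceptual difficulty here; the one thing demanding care is that $X$ is pre-ordered rather than partially ordered, so distinct elements $j,k$ may satisfy $j\leqslant k\leqslant j$. Consequently the index sets $L_i$, $\{i\}$, $R_j$ and the like need not be disjoint as subsets of $X$, and when collapsing a convolution one must use the Kronecker delta $\delta_{xy}$ (equality in $X$), never the order relation; at each coefficient comparison one should confirm that the basis elements being matched are truly equal in $\mathfrak{B}$. Keeping that bookkeeping straight is the main, and essentially the only, point of attention.
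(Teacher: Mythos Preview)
Your proposal is correct and follows essentially the same route as the paper: reduce to basis elements via Lemma~\ref{xx2.1}, extract the shape (1) from the factorizations of $e_{ij}$ through the idempotents $e_{ii}$ and $e_{jj}$, and read off the two coefficient relations from $D(e_{ij}e_{kl})$ in the cases $j\neq k$ and $j=k$. The only cosmetic differences are that the paper first isolates $D(e_{ii})$ from the idempotent identity and then uses the triple product $e_{ii}e_{ij}e_{jj}$ in one stroke (rather than your two separate factorizations $e_{ii}e_{ij}$ and $e_{ij}e_{jj}$), and it derives the relation $C^{jj}_{jk}+C^{kk}_{jk}=0$ from the general product $e_{ij}e_{kl}$ with $j\neq k$ rather than the special case $e_{jj}e_{kk}$; neither difference is material.
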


\begin{proof}
($\Longrightarrow$). For any $i\in X$, there is $D(e_{ii})=D(e_{ii})e_{ii}+e_{ii}D(e_{ii})$. Thus
$$\begin{aligned}
\sum_{e_{xy}\in \mathfrak{B}}C^{ii}_{xy}e_{xy}&=\big(\sum_{e_{xy}\in \mathfrak{B}}C^{ii}_{xy}e_{xy} \big)e_{ii}
+e_{ii}\big(\sum_{e_{xy}\in \mathfrak{B}}C^{ii}_{xy}e_{xy} \big)\\
&=\sum_{x,x\leqslant i}C^{ii}_{xi}e_{xi}+\sum_{y,i\leqslant y}C^{ii}_{iy}e_{iy}.
\end{aligned}\eqno(2)$$
Taking $x=i$ and $y=i$ in $(2)$, we have $C^{ii}_{ii}=0$. Hence
$$
D(e_{ii})=\sum_{x\in {L_i}}C^{ii}_{xi}e_{xi}+\sum_{y\in {R_i}}C^{ii}_{iy}e_{iy}. \eqno(3)
$$
For any $e_{ij}\in \mathfrak{B}$ with $i\neq j$, we deduce from equation $(3)$ that
$$\begin{aligned}
D(e_{ij})&=D(e_{ii}e_{ij}e_{jj})\\
&=D(e_{ii})e_{ij}+e_{ii}D(e_{ij})e_{jj}+e_{ij}D(e_{jj})\\
&=D(e_{ii})e_{ij}+C^{ij}_{ij}e_{ij}+e_{ij}D(e_{jj})\\
&=\sum_{x\in {L_i}}C^{ii}_{xi}e_{xj}+C^{ij}_{ij}e_{ij}+\sum_{y\in {R_j}}C^{jj}_{jy}e_{iy}.
\end{aligned}\eqno(4)$$
Note that the identities $(3)$ and $(4)$ proves $(1)$.

In order to determine the coefficients $C^{ij}_{xy}$, we need to apply the derivation $D$ to the identity
$e_{ij}e_{kl}=\delta_{jk}e_{il}$. There are two subcases.

{\bf Case 1.} When $j\neq k$, then
$$\begin{aligned}
0&=D(e_{ij})e_{kl}+e_{ij}D(e_{kl})\\
&=\Big(\sum_{x\in {L_i}}C^{ii}_{xi}e_{xj}+C^{ij}_{ij}e_{ij}+\sum_{y\in {R_j}}C^{jj}_{jy}e_{iy}  \Big)e_{kl}\\
&\quad +e_{ij}\Big(\sum_{x\in {L_k}}C^{kk}_{xk}e_{xl}+C^{kl}_{kl}e_{kl}+\sum_{y\in {R_l}}C^{ll}_{ly}e_{ky}  \Big)\\
&=C^{jj}_{jk}e_{il}+C^{kk}_{jk}e_{il}.
\end{aligned}\eqno(5)$$
Therefore $C^{jj}_{jk}+C^{kk}_{jk}=0$.

{\bf Case 2.} When $j=k$, then
$$\begin{aligned}
D(e_{il})&=D(e_{ij})e_{jl}+e_{ij}D(e_{jl})\\
&=\Big(\sum_{x\in {L_i}}C^{ii}_{xi}e_{xj}+C^{ij}_{ij}e_{ij}+\sum_{y\in {R_j}}C^{jj}_{jy}e_{iy}  \Big)e_{jl}\\
&\quad +e_{ij}\Big(\sum_{x\in {L_j}}C^{jj}_{xj}e_{xl}+C^{jl}_{jl}e_{jl}+\sum_{y\in {R_l}}C^{ll}_{ly}e_{jy}  \Big)\\
&=\sum_{x\in {L_i}}C^{ii}_{xi}e_{xl}+C^{ij}_{ij}e_{il}+C^{jl}_{jl}e_{il}+\sum_{y\in {R_l}}C^{ll}_{ly}e_{iy}.
\end{aligned}\eqno(6)$$
Taking $(1)$ into account, we obtain $C^{ij}_{ij}+C^{jl}_{jl}=C^{il}_{il}$, which proves the necessity.

($\Longleftarrow$). First it follows from the equation $C^{ij}_{ij}+C^{jk}_{jk}=C^{ik}_{ik}$ that $C^{ii}_{ii}=0$
for all $i\in X$. Hence by $(1)$, $D(e_{ii})$ is of the form as in equation $(3)$, i.e.
$$
D(e_{ii})=\sum_{x\in {L_i}}C^{ii}_{xi}e_{xi}+\sum_{y\in {R_i}}C^{ii}_{iy}e_{iy}. \eqno(7)
$$
Now by Lemma \ref{xx2.1}, to show that $D$ is a derivation we just need to show that
$D(e_{ij}e_{kl})=D(e_{ij})e_{kl}+e_{ij}D(e_{kl})$ for all $e_{ij},e_{kl}\in\mathfrak{B}$.
On the other hand, a little modification of the identities $(5)$ and $(6)$ shows that
the equations $D(e_{ij}e_{kl})=D(e_{ij})e_{kl}+e_{ij}D(e_{kl})$ for all $e_{ij},e_{kl}\in\mathfrak{B}$ always
hold. This completes the proof of the theorem.
\end{proof}

For any $g\in I(X,\mathcal{R})$, the mapping $f\mapsto [g,f]$ is a derivation of $I(X,\mathcal{R})$.
We call it an {\em inner derivation} and denote it as ${\rm Inn}_g$. A mapping $f: \leqslant\longrightarrow
\mathcal{R}$ is called {\em transitive} if
$$
f(i,j)+f(j,k)=f(i,k)
$$
for all $i,j,k\in X$ such that $i\leqslant j, j\leqslant k$. Note that every mapping $\sigma: X\longrightarrow \mathcal{R}$
determines a transitive mapping $(i,j)\mapsto \sigma(i)-\sigma(j)$. Transitive mappings of this form are called
{\em trivial}. If $f: \leqslant\longrightarrow \mathcal{R}$ is a transitive mapping, we define an $\mathcal{R}$-linear mapping
$\Delta_f: I(X,\mathcal{R})\longrightarrow I(X,\mathcal{R})$ by
$$
\Delta_f(e_{ij})=f(i.j)e_{ij}
$$
for all $e_{ij}\in \mathfrak{B}$.

\begin{lemma}\label{xx2.3}
With notations as above, $\Delta_f$ is a derivation. Moreover $\Delta_f$ is inner if and only if $f$ is trivial.
\end{lemma}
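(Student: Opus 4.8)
The plan is to treat the two assertions separately, leaning on Theorem~\ref{xx2.2} for the first and on a short entrywise computation for the second. For ``$\Delta_f$ is a derivation'' I would simply verify that $\Delta_f$ meets the hypotheses of Theorem~\ref{xx2.2}. Transitivity of $f$ with $i=j=k$ forces $f(i,i)=0$, so $\Delta_f(e_{ii})=f(i,i)e_{ii}=0$; hence in the shape $D(e_{ij})=\sum_{x\in L_i}C^{ii}_{xi}e_{xj}+C^{ij}_{ij}e_{ij}+\sum_{y\in R_j}C^{jj}_{jy}e_{iy}$ the coefficients $C^{ii}_{xi}$ (for $x\in L_i$) and $C^{jj}_{jy}$ (for $y\in R_j$) all vanish while $C^{ij}_{ij}=f(i,j)$, so $\Delta_f(e_{ij})=f(i,j)e_{ij}$ has precisely the form $(1)$. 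The relation $C^{jj}_{jk}+C^{kk}_{jk}=0$ then reads $0=0$ when $j\neq k$ and $2f(j,j)=0$ when $j=k$, both true since $f(j,j)=0$; and $C^{ij}_{ij}+C^{jk}_{jk}=C^{ik}_{ik}$ is exactly the transitivity identity $f(i,j)+f(j,k)=f(i,k)$. Thus Theorem~\ref{xx2.2} gives that $\Delta_f$ is a derivation. (Alternatively, one can bypass Theorem~\ref{xx2.2} and verify via Lemma~\ref{xx2.1} that $\Delta_f(e_{ij}e_{kl})$ and $\Delta_f(e_{ij})e_{kl}+e_{ij}\Delta_f(e_{kl})$ both vanish when $j\neq k$ and both equal $(f(i,j)+f(j,l))e_{il}=f(i,l)e_{il}$ when $j=k$.)

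For the equivalence, first suppose $f$ is trivial, say $f(i,j)=\sigma(i)-\sigma(j)$ for some $\sigma\colon X\to\mathcal{R}$. I would take $g\in I(X,\mathcal{R})$ to be the diagonal function $g(x,x)=\sigma(x)$, $g(x,y)=0$ for $x\neq y$ (equivalently ``$g=\sum_{x\in X}\sigma(x)e_{xx}$'', which is a bona fide element of $I(X,\mathcal{R})$ even for infinite $X$, since the incidence algebra consists of all admissible functions, not merely finite $\mathcal{R}$-combinations of the $e_{xy}$). A one-line convolution check gives $ge_{ij}=\sigma(i)e_{ij}$ and $e_{ij}g=\sigma(j)e_{ij}$, hence ${\rm Inn}_g(e_{ij})=[g,e_{ij}]=(\sigma(i)-\sigma(j))e_{ij}=f(i,j)e_{ij}=\Delta_f(e_{ij})$; since ${\rm Inn}_g$ and $\Delta_f$ are $\mathcal{R}$-linear and agree on the basis $\mathfrak{B}$, they coincide, so $\Delta_f$ is inner.

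Conversely, suppose $\Delta_f={\rm Inn}_g$ for some $g\in I(X,\mathcal{R})$. For each $i\leqslant j$ we then have the identity of functions $f(i,j)e_{ij}=ge_{ij}-e_{ij}g$. Rather than matching all basis coefficients, I would evaluate both sides at the single point $(i,j)\in X\times X$: the convolution sums collapse to $(ge_{ij})(i,j)=g(i,i)$ and $(e_{ij}g)(i,j)=g(j,j)$, so $f(i,j)=g(i,i)-g(j,j)$. Setting $\sigma(x):=g(x,x)$ exhibits $f$ as trivial.

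None of these steps is hard; the only genuine care needed is bookkeeping peculiar to the pre-ordered (as opposed to partially ordered) setting — distinct pairs $(x,y)$ still index distinct basis vectors $e_{xy}$ even when both $x\leqslant y$ and $y\leqslant x$ hold, so the coefficient identifications above are unambiguous, and the diagonal element $g$ built in the ``$\Leftarrow$'' direction may be an infinite $\mathcal{R}$-combination of the $e_{xx}$ yet still lie in $I(X,\mathcal{R})$. So the ``main obstacle'' is really just keeping these structural subtleties straight; over a partially ordered $X$ the statement recovers the classical description of inner derivations in Baclawski's work \cite{Ba}.
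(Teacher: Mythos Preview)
Your proof is correct and essentially matches the paper's own argument: for the equivalence ``$\Delta_f$ inner $\Leftrightarrow$ $f$ trivial'' your construction of the diagonal $g$ and the extraction $\sigma(x)=g(x,x)$ are exactly what the paper does. The only minor difference is that for ``$\Delta_f$ is a derivation'' the paper uses the one-line verification via Lemma~\ref{xx2.1} (your parenthetical alternative) rather than invoking Theorem~\ref{xx2.2}; both routes are equally valid, though the Lemma~\ref{xx2.1} route is shorter.
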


\begin{proof}
Note that $\Delta_f(e_{ij})e_{kl}+e_{ij}\Delta_f(e_{kl})=\delta_{jk}(f(i,j)+f(k,l))e_{il}=\Delta_f(e_{ij}e_{kl})$ for
all $e_{ij},e_{kl}\in\mathfrak{B}$. Then Lemma \ref{xx2.1} implies that $\Delta_f$ is a derivation.

If $f$ is trivial, there exists a mapping $\sigma: X\longrightarrow \mathcal{R}$ such that $f(i,j)=\sigma(i)-\sigma(j)$.
Let $g=\sum_{i\in X}\sigma(i)e_{ii}\in I(X,\mathcal{R})$. Then $[g,e_{ij}]=(\sigma(i)-\sigma(j))e_{ij}=\Delta_f(e_{ij})$
for all $e_{ij}\in\mathfrak{B}$, and hence $\Delta_f$ is inner. Conversely, if $\Delta_f$ is inner, there exists an
element $g=\sum_{e_{ij}\in\mathfrak{B}}C_{ij}e_{ij}$ such that $\Delta_f(e_{ij})=[g,e_{ij}]$ for all $e_{ij}\in\mathfrak{B}$.
Hence $f(i,j)=C_{ii}-C_{jj}$. Setting $\sigma: X\longrightarrow \mathcal{R}$ by $\sigma(i)=C_{ii}$, we get that $f$ is trivial.
\end{proof}

\begin{remark}\label{xx2.4}
We will call the derivation $\Delta_f$ by {\em transitive induced derivation} induced by the transitive mapping $f$.
The definition of transitive mappings first appeared in Nowicki's paper \cite{No1}, where he dealt with finite incidence algebras, i.e.
$X$ is a finite pre-ordered set.
\end{remark}

\begin{proposition}\label{xx2.5}
Let $D$ be a derivation of $I(X,\mathcal{R})$. Then there exist an element $g\in I(X,\mathcal{R})$ and a transitive mapping
$f$ such that
$$
D={\rm Inn}_g+\Delta_f.
$$
\end{proposition}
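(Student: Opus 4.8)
The plan is to read the decomposition straight off the normal form given by Theorem \ref{xx2.2}. First I would fix the expression
$$
D(e_{ij})=\sum_{x\in {L_i}}C^{ii}_{xi}e_{xj}+C^{ij}_{ij}e_{ij}+\sum_{y\in {R_j}}C^{jj}_{jy}e_{iy}
$$
together with the two relations on the coefficients provided by that theorem. The diagonal coefficients are the obvious candidate for the transitive part: define $f\colon{\leqslant}\longrightarrow\mathcal{R}$ by $f(i,j)=C^{ij}_{ij}$. The relation $C^{ij}_{ij}+C^{jk}_{jk}=C^{ik}_{ik}$ for $i\leqslant j\leqslant k$ is exactly the statement that $f(i,j)+f(j,k)=f(i,k)$, so $f$ is transitive and, by Lemma \ref{xx2.3}, $\Delta_f$ is a derivation of $I(X,\mathcal{R})$ with $\Delta_f(e_{ij})=C^{ij}_{ij}e_{ij}$.

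Next I would exhibit the inner part. The natural guess is
$$
g=\sum_{\substack{e_{xy}\in\mathfrak{B}\\ x\neq y}}C^{yy}_{xy}\,e_{xy}\in I(X,\mathcal{R}),
$$
i.e. the function vanishing on the diagonal and sending $(x,y)$ with $x<y$ to $C^{yy}_{xy}$; this is a genuine element of the incidence algebra since it is simply a function on $X\times X$ supported on $\leqslant$. Using the convolution rule $e_{xy}e_{uv}=\delta_{yu}e_{xv}$ one computes $g\,e_{ij}=\sum_{x\in L_i}C^{ii}_{xi}e_{xj}$ and $e_{ij}\,g=\sum_{y\in R_j}C^{yy}_{jy}e_{iy}$, where transitivity of $\leqslant$ is what guarantees that the basis elements $e_{xj}$ (with $x\leqslant i\leqslant j$) and $e_{iy}$ (with $i\leqslant j\leqslant y$) produced by these products again belong to $\mathfrak{B}$. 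Invoking the first relation $C^{jj}_{jy}+C^{yy}_{jy}=0$ to replace $C^{yy}_{jy}$ by $-C^{jj}_{jy}$, I get
$$
{\rm Inn}_g(e_{ij})=g\,e_{ij}-e_{ij}\,g=\sum_{x\in L_i}C^{ii}_{xi}e_{xj}+\sum_{y\in R_j}C^{jj}_{jy}e_{iy}.
$$
Adding $\Delta_f(e_{ij})=C^{ij}_{ij}e_{ij}$ recovers $D(e_{ij})$ exactly, and since $D$, ${\rm Inn}_g$ and $\Delta_f$ are all $\mathcal{R}$-linear and $\mathfrak{B}$ is a basis, this yields $D={\rm Inn}_g+\Delta_f$.

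The only delicate point is choosing $g$ correctly and keeping the bookkeeping in the two convolution products straight --- in particular respecting the index sets $L_i$, $R_j$ and the constraint $x\neq y$ in the sum defining $g$, and checking that each basis element produced by the multiplications is again in $\mathfrak{B}$, which is precisely where transitivity of the pre-order is used. Everything past that is a formal rearrangement using the two coefficient relations from Theorem \ref{xx2.2}, and notably no torsion hypothesis on $\mathcal{R}$ is required here.
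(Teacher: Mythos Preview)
Your proof is correct and follows essentially the same route as the paper: both define $f(i,j)=C^{ij}_{ij}$ and $g=\sum C^{yy}_{xy}e_{xy}$ (the diagonal terms contribute nothing since $C^{ii}_{ii}=0$, so your restriction to $x\neq y$ is harmless), then use the relation $C^{jj}_{jy}+C^{yy}_{jy}=0$ to identify ${\rm Inn}_g(e_{ij})$ with the two off-diagonal sums in the normal form. Your write-up is in fact more explicit than the paper's in carrying out the convolution computations $g\,e_{ij}$ and $e_{ij}\,g$; the only quibble is the phrase ``$x<y$'' in a pre-ordered set, but your formal definition with $x\neq y$ is unambiguous.
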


\begin{proof}
It follows from Theorem \ref{xx2.2} that
$$
D(e_{ij})=\sum_{x\in {L_i}}C^{ii}_{xi}e_{xj}+C^{ij}_{ij}e_{ij}+\sum_{y\in {R_j}}C^{jj}_{jy}e_{iy}
$$
for all $e_{ij}\in \mathfrak{B}$, where the coefficients $C^{ij}_{xy}$ are subject to the following relations
$$
C^{jj}_{jk}+C^{kk}_{jk}=0\hspace{8pt}\text{and}\hspace{8pt} C^{ij}_{ij}+C^{jk}_{jk}=C^{ik}_{ik}.
$$
Define $f: \leqslant\longrightarrow \mathcal{R}$ by $(i,j)\mapsto C^{ij}_{ij}$. Then $f$ is transitive since
$C^{ij}_{ij}+C^{jk}_{jk}=C^{ik}_{ik}$. On the other hand, Let $g=\sum_{e_{ij}\in\mathfrak{B}}C^{jj}_{ij}e_{ij}$.
Note that $g(i,j)=C^{jj}_{ij}=-C^{ii}_{ij}$. Hence
$$
D(e_{ij})={\rm Inn}_g(e_{ij})+\Delta_f(e_{ij})
$$
for all $e_{ij}\in\mathfrak{B}$. The proposition follows from the fact that the mappings $D$, ${\rm Inn}_g$ and $\Delta_f$
are all $\mathcal{R}$-linear.
\end{proof}

We point out that Proposition \ref{xx2.5} can also be obtained from \cite[Theorem 6.3]{Ko}, where Koppinen
use deeply the coalgebra structure of incidence algebras. Our approach is elementary but is convenient for
studying Jordan derivations in next section. Lemma \ref{xx2.3} and Proposition \ref{xx2.5} deduce the following.

\begin{corollary}\label{xx2.6}
Every derivation of $I(X,\mathcal{R})$ is inner if and only if every transitive mapping is trivial.
\end{corollary}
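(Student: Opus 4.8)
The plan is to deduce this directly from Lemma~\ref{xx2.3} and Proposition~\ref{xx2.5}, which between them already expose the structure both of an arbitrary derivation and of the induced derivations $\Delta_f$; no fresh computation with the coefficients $C^{ij}_{xy}$ should be needed.

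For the ``if'' direction I would assume that every transitive mapping is trivial and take an arbitrary derivation $D$ of $I(X,\mathcal{R})$. Proposition~\ref{xx2.5} lets me write $D={\rm Inn}_g+\Delta_f$ for some $g\in I(X,\mathcal{R})$ and some transitive mapping $f$. Since $f$ is trivial by hypothesis, Lemma~\ref{xx2.3} supplies an element $h\in I(X,\mathcal{R})$ (concretely $h=\sum_{i\in X}\sigma(i)e_{ii}$, where $f(i,j)=\sigma(i)-\sigma(j)$) with $\Delta_f={\rm Inn}_h$. Then $D={\rm Inn}_g+{\rm Inn}_h={\rm Inn}_{g+h}$, using that the commutator $f\mapsto[g,f]$ is additive in $g$ and that $g+h\in I(X,\mathcal{R})$; hence $D$ is inner.

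For the converse I would assume that every derivation of $I(X,\mathcal{R})$ is inner and take an arbitrary transitive mapping $f$. By Lemma~\ref{xx2.3} the map $\Delta_f$ is a derivation, so by hypothesis it is inner, and then the ``only if'' half of Lemma~\ref{xx2.3} forces $f$ to be trivial. This settles both implications.

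I do not expect any genuine obstacle: the corollary is a formal consequence of the two cited results, and the only point I would flag for explicit verification is that the inner derivations of $I(X,\mathcal{R})$ form a subgroup of the derivation group, i.e.\ ${\rm Inn}_g+{\rm Inn}_h={\rm Inn}_{g+h}$, which is immediate. If a cleaner exposition is wanted, one can run the argument contrapositively in one direction — a non-trivial transitive $f$ yields, via $\Delta_f$, a non-inner derivation — but the direct form above is shortest.
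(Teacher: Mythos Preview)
Your proposal is correct and matches the paper's approach exactly: the paper gives no explicit proof of the corollary at all, merely stating that it follows from Lemma~\ref{xx2.3} and Proposition~\ref{xx2.5}, and your argument is precisely the natural unpacking of that remark.
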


\begin{corollary}\label{xx2.7}
The incidence algebra $I(X,\mathcal{R})$ is a Kadison algebra, i.e. every local derivation of $I(X,\mathcal{R})$
is a derivation.
\end{corollary}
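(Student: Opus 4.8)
The plan is to read the statement off Theorem~\ref{xx2.2}. Recall that an $\mathcal{R}$-linear map $D:I(X,\mathcal{R})\rightarrow I(X,\mathcal{R})$ is a local derivation if for each $a\in I(X,\mathcal{R})$ there is a derivation $D_a$ of $I(X,\mathcal{R})$ with $D(a)=D_a(a)$; fix such a $D$, write $D(e_{ij})=\sum_{e_{xy}\in\mathfrak{B}}C^{ij}_{xy}e_{xy}$ as in Section~\ref{xxsec2}, and for a derivation $\delta$ write similarly $\delta(e_{ij})=\sum_{e_{xy}\in\mathfrak{B}}C^{ij}_{xy}(\delta)e_{xy}$. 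Applying the local condition at a single $e_{ij}$ and running the resulting derivation $D_{e_{ij}}$ through Theorem~\ref{xx2.2} already shows that $D(e_{ij})$ is supported on the ``cross'' $\{e_{xj}\mid x\in L_i\}\cup\{e_{ij}\}\cup\{e_{iy}\mid y\in R_j\}$, and in particular $C^{ii}_{ii}=0$ for all $i$. What remains is to upgrade this to the full shape $(1)$ of Theorem~\ref{xx2.2} together with its two coefficient relations; once that is done, the theorem forces $D$ to be a derivation.

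First I would recover the shape $(1)$. For $i\neq j$, apply the local condition at $e_{ii}+e_{ij}$: if $\delta$ is a derivation with $D(e_{ii}+e_{ij})=\delta(e_{ii}+e_{ij})$, then, using that $\delta$ satisfies $(1)$, comparing the coefficients of $e_{xj}$ and of $e_{xi}$ (for $x\in L_i$) on the two sides gives $C^{ij}_{xj}=C^{ii}_{xi}(\delta)=C^{ii}_{xi}$; symmetrically, testing at $e_{ij}+e_{jj}$ gives $C^{ij}_{iy}=C^{jj}_{jy}$ for $y\in R_j$. Next, testing at $e_{jj}+e_{kk}$ (for $j\leqslant k$, $j\neq k$) and comparing the coefficient of $e_{jk}$ gives $C^{jj}_{jk}+C^{kk}_{jk}=C^{jj}_{jk}(\delta)+C^{kk}_{jk}(\delta)=0$ by the first relation of Theorem~\ref{xx2.2} for $\delta$. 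Thus $D$ satisfies $(1)$ and the first coefficient relation.

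Now I would peel off an inner derivation: put $g=\sum_{e_{ij}\in\mathfrak{B}}C^{jj}_{ij}e_{ij}\in I(X,\mathcal{R})$ and $D'=D-{\rm Inn}_g$. Since ${\rm Inn}_g$ is a derivation, $D'$ is again a local derivation (the witness at $a$ being $D_a-{\rm Inn}_g$), and a short computation with the convolution product, using the shape $(1)$ and the relation $C^{jj}_{jy}+C^{yy}_{jy}=0$, shows $D'(e_{ij})=C^{ij}_{ij}e_{ij}$ for every $e_{ij}\in\mathfrak{B}$. Setting $\phi(i,j):=C^{ij}_{ij}$, the question reduces to whether $\phi$ is transitive: for then $D'=\Delta_\phi$ is a derivation by Lemma~\ref{xx2.3}, and hence $D=D'+{\rm Inn}_g$ is a derivation, as wanted.

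The hard part is precisely this transitivity relation $\phi(i,j)+\phi(j,k)=\phi(i,k)$ for $i\leqslant j\leqslant k$. When two of $i,j,k$ coincide it is immediate from $\phi(i,i)=C^{ii}_{ii}=0$, so assume they are distinct and, for now, that no two are $\leqslant$-equivalent. The key is to test the local condition for $D'$ at the \emph{four}-term element $a=e_{ij}+e_{jk}+e_{ik}+e_{jj}$. Since $D'(a)=\phi(i,j)e_{ij}+\phi(j,k)e_{jk}+\phi(i,k)e_{ik}$ (using $\phi(j,j)=0$), writing $D'(a)=\tilde\delta(a)$ for a derivation $\tilde\delta$ and comparing the coefficients of $e_{ij}$, $e_{jk}$, $e_{ik}$ on the two sides --- keeping track of which of the four crosses of $e_{ij},e_{jk},e_{ik},e_{jj}$ meets each of these basis elements --- yields
\[
\phi(i,j)=C^{ij}_{ij}(\tilde\delta)+C^{jj}_{ij}(\tilde\delta),\qquad \phi(j,k)=C^{jk}_{jk}(\tilde\delta)+C^{jj}_{jk}(\tilde\delta),
\]
\[
\phi(i,k)=C^{ik}_{ik}(\tilde\delta)+C^{jj}_{ij}(\tilde\delta)+C^{jj}_{jk}(\tilde\delta).
\]
Adding the first two and invoking the second relation of Theorem~\ref{xx2.2} for $\tilde\delta$, namely $C^{ij}_{ij}(\tilde\delta)+C^{jk}_{jk}(\tilde\delta)=C^{ik}_{ik}(\tilde\delta)$, the parasitic terms $C^{jj}_{ij}(\tilde\delta)$ and $C^{jj}_{jk}(\tilde\delta)$ are exactly those appearing in the third identity, so $\phi(i,j)+\phi(j,k)=\phi(i,k)$ falls out. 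Including $e_{jj}$ is what makes $\tilde\delta$ ``see'' $e_{ik}$ while forcing the extra terms to cancel; a two- or three-term test element does not accomplish this, which is why this step is the crux. The remaining instances, in which some of $i,j,k$ are $\leqslant$-equivalent, are handled by the same scheme once one observes that the points $\leqslant$-equivalent to a given one span a full matrix subalgebra of $I(X,\mathcal{R})$, together with the classical fact that matrix algebras over a commutative ring are Kadison algebras. With all cases in hand, $D'$ --- and hence $D$ --- meets the criterion of Theorem~\ref{xx2.2}, so $D$ is a derivation and $I(X,\mathcal{R})$ is a Kadison algebra.
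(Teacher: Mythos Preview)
Your argument is essentially correct and, in fact, far more explicit than what the paper offers: the paper's own ``proof'' of Corollary~\ref{xx2.7} consists entirely of the sentence ``The proof is similar to that of \cite[Theorem 3]{No2}.'' So there is nothing to compare your approach against within the paper itself; what you have written is a genuine self-contained proof via Theorem~\ref{xx2.2}, which the paper does not supply. Your four-term test element $a=e_{ij}+e_{jk}+e_{ik}+e_{jj}$ is the right idea, and your coefficient bookkeeping for the non-equivalent case is accurate.

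One point deserves a bit more care. In the pre-ordered (non-poset) case you defer to ``matrix algebras over a commutative ring are Kadison algebras.'' For that reduction to go through you need the witness derivations to restrict to the matrix block, which is not automatic. The fix is the standard compression argument: if $E=\sum_{u\sim i}e_{uu}$ is the idempotent of the equivalence class (finite by local finiteness), then for any derivation $\delta$ of $I(X,\mathcal{R})$ the map $a\mapsto E\delta(a)E$ is a derivation of the corner $EI(X,\mathcal{R})E\cong {\rm M}_n(\mathcal{R})$, and since $D'$ already sends the block into itself one has $D'(a)=E\delta_a(a)E$ there. This makes $D'|_{\text{block}}$ a local derivation of the matrix block, hence a derivation, which yields $\phi(u,v)+\phi(v,w)=\phi(u,w)$ for equivalent $u,v,w$. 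The mixed cases (exactly two of $i,j,k$ equivalent) then follow by combining this with a suitable variant of your four-term test, or alternatively by first establishing $\phi(i,j)+\phi(j,i)=0$ from the block argument and then rerunning the coefficient comparison while tracking the one extra overlap term. With that detail added, your proof is complete.
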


\begin{proof}
The proof is similar to that of \cite[Theorem 3]{No2}.
\end{proof}

\section{Jordan Derivations on Incidence Algebras}\label{xxsec3}

Throughout this section, we assume that $\mathcal{R}$ is $2$-torsion free. Let
$D: I(X,\mathcal{R})\longrightarrow I(X,\mathcal{R})$ be a Jordan derivation. We also denote
$$
D(e_{ij})=\sum_{e_{xy}\in \mathfrak{B}}C^{ij}_{xy}e_{xy}.
$$
for all $i,j\in X$ with $i\leqslant j$. We assume $C^{ij}_{xy}=0$, if needed, for $x\nleqslant y$.
The following lemma is due to Herstein \cite{He}.

\begin{lemma}\label{xx3.1}
Let $A$ be a $2$-torsion free ring, $D: A\longrightarrow A$ be a Jordan derivation. Then for any
$a,b,c\in A$, we have
\begin{enumerate}
\item[(1)] $D(ab+ba)=D(a)b+aD(b)+D(b)a+bD(a)$,
\item[(2)] $D(aba)=D(a)ba+aD(b)a+abD(a)$,
\item[(3)] $D(abc+cba)=D(a)bc+aD(b)c+abD(c)+D(c)ba+cD(b)a+cbD(a)$.
\end{enumerate}
\end{lemma}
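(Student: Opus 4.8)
The plan is to establish the three identities in the order listed, using the defining Jordan identity $D(x^2)=D(x)x+xD(x)$ together with its linearizations. Only additivity of $D$ is used, so it is the $2$-torsion freeness of $A$ that carries the weight at the one crucial step.

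First I would prove (1) by linearizing the defining identity: applying $D$ to $(a+b)^2=a^2+ab+ba+b^2$ and using additivity gives $D(a^2)+D(ab+ba)+D(b^2)$ on the left, while $D(a+b)(a+b)+(a+b)D(a+b)$ expands on the right; subtracting the instances $D(a^2)=D(a)a+aD(a)$ and $D(b^2)=D(b)b+bD(b)$ leaves exactly (1). Next, for (2) I would use Herstein's device of evaluating $D$ on the element $w:=(ab+ba)a+a(ab+ba)$ in two ways. On the one hand $w=2aba+a^2b+ba^2$, so by additivity $D(w)=2D(aba)+D(a^2b+ba^2)$, and (1) applied to the pair $(a^2,b)$ together with $D(a^2)=D(a)a+aD(a)$ expresses $D(a^2b+ba^2)$ in terms already understood. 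On the other hand, (1) applied to the pair $(ab+ba,a)$ gives $D(w)=D(ab+ba)a+(ab+ba)D(a)+D(a)(ab+ba)+aD(ab+ba)$, and substituting (1) once more for each occurrence of $D(ab+ba)$ again expresses $D(w)$ in known terms. Equating the two expressions, all monomials not involving $D(aba)$ cancel and one is left with $2D(aba)=2\bigl(D(a)ba+aD(b)a+abD(a)\bigr)$; since $A$ is $2$-torsion free, (2) follows.

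Finally, (3) is obtained by linearizing (2): replacing $a$ by $a+c$ and expanding $(a+c)b(a+c)=aba+abc+cba+cbc$, then subtracting the instances $D(aba)=D(a)ba+aD(b)a+abD(a)$ and $D(cbc)=D(c)bc+cD(b)c+cbD(c)$ of (2), yields precisely (3) after collecting terms; note that no division is needed here, so (3) holds even without the torsion hypothesis once (2) is in hand.

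The main obstacle is the bookkeeping in step (2): the double application of (1) to $D(w)$ produces roughly a dozen noncommutative monomials that must be matched against those coming from $D(a^2b+ba^2)$, and because $A$ need not be commutative no reordering is permitted, so the cancellation has to be checked monomial by monomial. The conceptual content lies entirely in the choice of $w$, engineered so that its expansion isolates the single new term $2aba$ while the remainder $a^2b+ba^2$ is already governed by (1); the $2$-torsion freeness is then invoked exactly once, to divide by $2$ at the very end.
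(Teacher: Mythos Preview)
Your argument is correct and is precisely the classical proof due to Herstein: linearize the defining identity to get (1), compare the two evaluations of $D\bigl((ab+ba)a+a(ab+ba)\bigr)$ to obtain (2) after cancelling by $2$, and linearize (2) in $a$ to get (3). The paper does not supply its own proof of this lemma---it simply attributes the result to Herstein \cite{He}---so there is nothing further to compare; your write-up reproduces exactly the standard derivation the citation points to.
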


\begin{lemma}\label{xx3.2}
Let $D: I(X,\mathcal{R})\rightarrow I(X,\mathcal{R})$ be a Jordan derivation. Then
$$
D(e_{ij})=\sum_{x\in {L_i}}C^{ii}_{xi}e_{xj}+C^{ij}_{ij}e_{ij}+\sum_{y\in {R_j}}C^{jj}_{jy}e_{iy}+C^{ij}_{ji}e_{ji}
$$
for all $e_{ij}\in \mathfrak{B}$, where the coefficients $C^{ij}_{xy}$ are subject to the following relations
$$\begin{aligned}
C^{jj}_{jk}+C^{kk}_{jk}&=0, \hspace{6pt} \text{if}\hspace{6pt} j\leqslant k;\\
C^{ij}_{ij}+C^{jk}_{jk}&=C^{ik}_{ik}, \hspace{6pt} \text{if}\hspace{6pt} i\leqslant j,j\leqslant k.
\end{aligned}$$
\end{lemma}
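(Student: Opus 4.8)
The plan is to mimic the proof of Theorem \ref{xx2.2} but keep track of the extra ``below-diagonal'' pieces that a Jordan derivation may a priori produce. Concretely, I would first apply Lemma \ref{xx3.1}(1) to suitable pairs of basis elements. The key computational identity is obtained by plugging $a=e_{ij}$ and $b=e_{kl}$ into $D(ab+ba)=D(a)b+aD(b)+D(b)a+bD(a)$: when $j\ne k$ and $l\ne i$ both products $e_{ij}e_{kl}$ and $e_{kl}e_{ij}$ vanish, so the right-hand side must vanish, and expanding $D(e_{ij})$ and $D(e_{kl})$ in the basis $\mathfrak B$ and comparing coefficients will force most of the coefficients $C^{ij}_{xy}$ with $x\not\le y$ in the ``wrong direction'' to be zero, leaving only the term $C^{ij}_{ji}e_{ji}$ in the display. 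I expect that a careful bookkeeping of which $(x,y)$ can survive — paying attention to the fact that $L_x\cap R_x$ need not be empty because $X$ is merely pre-ordered — is exactly where one has to be meticulous.

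Next I would pin down the shape of $D(e_{ii})$. Starting from $D(e_{ii}^2)=D(e_{ii})e_{ii}+e_{ii}D(e_{ii})$ (the defining Jordan identity applied to the idempotent $e_{ii}$) one gets, exactly as in equation $(2)$ of the excerpt, that $D(e_{ii})=\sum_{x}C^{ii}_{xi}e_{xi}+\sum_{y}C^{ii}_{iy}e_{iy}$ with $C^{ii}_{ii}=0$; note that here there is no off-diagonal surprise because $e_{ii}$ is a sum-free idempotent and the two one-sided multiplications already cut $D(e_{ii})$ down to row $i$ plus column $i$. Then for $i\ne j$ I would use Lemma \ref{xx3.1}(2) with $a=e_{ii}$ (or $a=e_{jj}$) and $b=e_{ij}$, i.e. $D(e_{ii}e_{ij}e_{ii})$, $D(e_{jj}e_{ij}e_{jj})$, together with Lemma \ref{xx3.1}(3) applied to the triple $e_{ii},e_{ij},e_{jj}$ to reconstruct $D(e_{ij})$ from $D(e_{ii})$, $D(e_{jj})$ and the ``middle'' coefficients; comparing with the basis expansion isolates the claimed five groups of terms $\sum_{x\in L_i}C^{ii}_{xi}e_{xj}$, $C^{ij}_{ij}e_{ij}$, $\sum_{y\in R_j}C^{jj}_{jy}e_{iy}$ and the extra $C^{ij}_{ji}e_{ji}$.

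Finally, to get the two stated relations among the coefficients I would apply the Jordan identities to the products $e_{ij}e_{jj}+e_{jj}e_{ij}=e_{ij}$ and, more importantly, to $e_{ij}e_{jk}+e_{jk}e_{ij}$ for $i\le j\le k$: the $e_{ik}$-coefficient on the right-hand side of Lemma \ref{xx3.1}(1) must match $D(e_{ik}e_{kk})$, and reading off the $e_{ik}$ component yields $C^{ij}_{ij}+C^{jk}_{jk}=C^{ik}_{ik}$, while reading off a suitable diagonal-adjacent component (or applying part (1) to $e_{jk}$ with $e_{kk}$) gives $C^{jj}_{jk}+C^{kk}_{jk}=0$, just as in Case 1 and Case 2 of Theorem \ref{xx2.2}. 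The main obstacle will not be any single hard step but rather the extra term $C^{ij}_{ji}e_{ji}$: I have to make sure that in every one of these expansions the new below-diagonal terms either cancel or are harmless, and in particular that they do not interfere with the derivation of the two relations — this is precisely the bookkeeping subtlety that distinguishes the Jordan case from the derivation case, and it is what later sections will need in order to conclude that in fact all these extra coefficients vanish and $D$ is a genuine derivation.
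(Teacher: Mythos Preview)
Your plan matches the paper's proof: use the idempotent relation for $D(e_{ii})$, then Lemma~\ref{xx3.1}(3) on the triple $(e_{ii},e_{ij},e_{jj})$ to obtain the displayed form of $D(e_{ij})$ (the term $e_{jj}D(e_{ij})e_{ii}$ is precisely the extra $C^{ij}_{ji}e_{ji}$), and finally Lemma~\ref{xx3.1}(1) on $e_{ij}e_{kl}+e_{kl}e_{ij}$, split into the subcases $j\ne k$ versus $j=k$ (and $l\ne i$ versus $l=i$), to read off the two coefficient relations. Your detour through Lemma~\ref{xx3.1}(2) is unnecessary, and note that the paper actually needs the $2$-torsion-free hypothesis in the $j\ne k$, $l\ne i$ case when $(i,l)=(k,j)$, since then the $e_{il}$ and $e_{kj}$ contributions coincide and the relation appears with a factor of $2$.
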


\begin{proof}
Since $e_{ii}$ is an idempotent element for all $i\in X$, a similar computation as in Theorem \ref{xx2.2} shows that
$C^{ii}_{ii}=0$ and
$$
D(e_{ii})=\sum_{x\in {L_i}}C^{ii}_{xi}e_{xi}+\sum_{y\in {R_i}}C^{ii}_{iy}e_{iy}. \eqno(8)
$$
For any $e_{ij}\in \mathfrak{B}$ with $i\neq j$, we deduce from Lemma \ref{xx3.1} $(3)$ and equation $(8)$ that
$$\begin{aligned}
D(e_{ij})&=D(e_{ii}e_{ij}e_{jj}+e_{jj}e_{ij}e_{ii})\\
&=D(e_{ii})e_{ij}+e_{ii}D(e_{ij})e_{jj}+e_{ij}D(e_{jj})+e_{jj}D(e_{ij})e_{ii}\\
&=D(e_{ii})e_{ij}+C^{ij}_{ij}e_{ij}+e_{ij}D(e_{jj})+C^{ij}_{ji}e_{ji}\\
&=\sum_{x\in {L_i}}C^{ii}_{xi}e_{xj}+C^{ij}_{ij}e_{ij}+\sum_{y\in {R_j}}C^{jj}_{jy}e_{iy}+C^{ij}_{ji}e_{ji}.
\end{aligned}\eqno(9)$$
Combining the equations $(8)$ and $(9)$, we get the form of $D(e_{ij})$. It would be helpful to point out that
$C^{ij}_{ji}=0$ if $j\nleqslant i$ by our assumption.

In order to determine the coefficients $C^{ij}_{xy}$, we need to apply the Jordan derivation $D$ to the identity
$e_{ij}e_{kl}+e_{kl}e_{ij}=\delta_{jk}e_{il}+\delta_{li}e_{kj}$. There are four subcases.

{\bf Case 1.} When $l\neq i$ and $j\neq k$, we have
$$\begin{aligned}
0&=D(e_{ij})e_{kl}+e_{ij}D(e_{kl})+D(e_{kl})e_{ij}+e_{kl}D(e_{ij})\\
&=\Big(\sum_{x\in {L_i}}C^{ii}_{xi}e_{xj}+C^{ij}_{ij}e_{ij}+\sum_{y\in {R_j}}C^{jj}_{jy}e_{iy}+C^{ij}_{ji}e_{ji}  \Big)e_{kl}\\
&\quad +e_{ij}\Big(\sum_{x\in {L_k}}C^{kk}_{xk}e_{xl}+C^{kl}_{kl}e_{kl}+\sum_{y\in {R_l}}C^{ll}_{ly}e_{ky}+C^{kl}_{lk}e_{lk}  \Big)\\
&\quad +\Big(\sum_{x\in {L_k}}C^{kk}_{xk}e_{xl}+C^{kl}_{kl}e_{kl}+\sum_{y\in {R_l}}C^{ll}_{ly}e_{ky}+C^{kl}_{lk}e_{lk}  \Big)e_{ij}\\
&\quad +e_{kl}\Big(\sum_{x\in {L_i}}C^{ii}_{xi}e_{xj}+C^{ij}_{ij}e_{ij}+\sum_{y\in {R_j}}C^{jj}_{jy}e_{iy}+C^{ij}_{ji}e_{ji}  \Big)\\
&=C^{jj}_{jk}e_{il}+\delta_{ik}C^{ij}_{ji}e_{jl}+C^{kk}_{jk}e_{il}+\delta_{jl}C^{kl}_{lk}e_{ik}\\
&\quad +C^{ll}_{li}e_{kj}+\delta_{ik}C^{kl}_{lk}e_{lj}+C^{ii}_{li}e_{kj}+\delta_{jl}C^{ij}_{ji}e_{ki}.
\end{aligned}\eqno(10)$$
We consider the coefficient of $e_{il}$ in the equation $(10)$. If $e_{il}=e_{jl}$, then $C^{ij}_{ji}=C^{ii}_{ii}=0$.
If $e_{il}=e_{ik}$, then $C^{kl}_{lk}=C^{kk}_{kk}=0$. Therefore, $C^{jj}_{jk}+C^{kk}_{jk}=0$ if $(i,l)\neq (k,j)$ or
$2(C^{jj}_{jk}+C^{kk}_{jk})=0$ if $(i,l)=(k,j)$. Hence $C^{jj}_{jk}+C^{kk}_{jk}=0$ for $j\leqslant k$ since
$\mathcal{R}$ is $2$-torsion free.

{\bf Case 2.} When $l\neq i$ and $j=k$, we have
$$\begin{aligned}
D(e_{il})&=D(e_{ij})e_{kl}+e_{ij}D(e_{kl})+D(e_{kl})e_{ij}+e_{kl}D(e_{ij})\\
&=\Big(\sum_{x\in {L_i}}C^{ii}_{xi}e_{xj}+C^{ij}_{ij}e_{ij}+\sum_{y\in {R_j}}C^{jj}_{jy}e_{iy}+C^{ij}_{ji}e_{ji}  \Big)e_{jl}\\
&\quad +e_{ij}\Big(\sum_{x\in {L_j}}C^{jj}_{xj}e_{xl}+C^{jl}_{jl}e_{jl}+\sum_{y\in {R_l}}C^{ll}_{ly}e_{jy}+C^{jl}_{lj}e_{lj}  \Big)\\
&\quad +\Big(\sum_{x\in {L_j}}C^{jj}_{xj}e_{xl}+C^{jl}_{jl}e_{jl}+\sum_{y\in {R_l}}C^{ll}_{ly}e_{jy}+C^{jl}_{lj}e_{lj}  \Big)e_{ij}\\
&\quad +e_{jl}\Big(\sum_{x\in {L_i}}C^{ii}_{xi}e_{xj}+C^{ij}_{ij}e_{ij}+\sum_{y\in {R_j}}C^{jj}_{jy}e_{iy}+C^{ij}_{ji}e_{ji}  \Big)\\
&=\sum_{x\in {L_i}}C^{ii}_{xi}e_{xl}+C^{ij}_{ij}e_{il}+\delta_{ij}C^{ij}_{ji}e_{jl}
+C^{jl}_{jl}e_{il}+\sum_{y\in {R_l}}C^{ll}_{ly}e_{iy}+\delta_{jl}C^{jl}_{lj}e_{ij}\\
&\quad +C^{ll}_{li}e_{jj}+\delta_{ij}C^{jl}_{lj}e_{lj}+C^{ii}_{li}e_{jj}+\delta_{lj}C^{ij}_{ji}e_{ji}\\
&=\sum_{x\in {L_i}}C^{ii}_{xi}e_{xl}+C^{ij}_{ij}e_{il}+C^{jl}_{jl}e_{il}+\sum_{y\in {R_l}}C^{ll}_{ly}e_{iy}+\delta_{ij}C^{jl}_{lj}e_{lj}+\delta_{lj}C^{ij}_{ji}e_{ji},
\end{aligned}\eqno(11)$$
where the last identity follows from the fact $\delta_{ij}C^{ij}_{ji}=0$ and $C^{ll}_{li}+C^{ii}_{li}=0$.
Therefore, there is
$$
C^{il}_{il}e_{il}+C^{il}_{li}e_{li}=C^{ij}_{ij}e_{il}+C^{jl}_{jl}e_{il}+\delta_{ij}C^{jl}_{lj}e_{lj}+\delta_{lj}C^{ij}_{ji}e_{ji}.\eqno(12)
$$
We consider the coefficient of $e_{il}$ in the equation $(12)$. Note that $l\neq i$ in this case. We obtain
$C^{ij}_{ij}+C^{jl}_{jl}=C^{il}_{il}$.

{\bf Case 3.} When $l=i$ and $j\neq k$, this case is same with the Case 2.

{\bf Case 4.} When $l=i$ and $j=k$, a direct calculation shows that $C^{ij}_{ij}+C^{ji}_{ji}=0$. Combining with the Case 2 or the
Case 3, we have $C^{ij}_{ij}+C^{jl}_{jl}=C^{il}_{il}$ for all $i\leqslant j$ and $j\leqslant l$.
\end{proof}

We now are at the position to prove the main result of this paper.

\begin{theorem}\label{xx3.3}
Let $\mathcal{R}$ be a $2$-torsion free commutative ring with identity. Then every Jordan derivation of the incidence
algebra $I(X,\mathcal{R})$ is a derivation.
\end{theorem}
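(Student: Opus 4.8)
The plan is the following. By Lemma~\ref{xx3.2} the Jordan derivation $D$ already satisfies
$$D(e_{ij})=\sum_{x\in {L_i}}C^{ii}_{xi}e_{xj}+C^{ij}_{ij}e_{ij}+\sum_{y\in {R_j}}C^{jj}_{jy}e_{iy}+C^{ij}_{ji}e_{ji}$$
for every $e_{ij}\in\mathfrak{B}$, with the coefficients obeying exactly the two relations that appear in Theorem~\ref{xx2.2}. Comparing with formula $(1)$, the only obstruction to $D$ being a derivation is the presence of the extra ``flip'' terms $C^{ij}_{ji}e_{ji}$; moreover, as recorded in the proof of Lemma~\ref{xx3.2}, such a term can be nonzero only when $i\neq j$ and both $i\leqslant j$ and $j\leqslant i$ hold. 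Hence it suffices to prove that $C^{ij}_{ji}=0$ whenever $i\leqslant j$, $j\leqslant i$ and $i\neq j$; once this is done, $D(e_{ij})$ has the form $(1)$ with precisely the Theorem~\ref{xx2.2} relations, and Theorem~\ref{xx2.2} finishes the proof.

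To kill the flip coefficients I would exploit the one Jordan identity not yet used, namely $D(e_{ij}^2)=D(e_{ij})e_{ij}+e_{ij}D(e_{ij})$. Fix $i\neq j$ with $i\leqslant j$ and $j\leqslant i$; then $e_{ij}^2=\delta_{ji}e_{ij}=0$, so the left side vanishes. Expanding the right side with the formula from Lemma~\ref{xx3.2} and the rule $e_{xy}e_{uv}=\delta_{yu}e_{xv}$, almost every term drops out; collecting what remains one obtains
$$0=\big(C^{jj}_{ji}+C^{ii}_{ji}\big)e_{ij}+C^{ij}_{ji}\big(e_{ii}+e_{jj}\big),$$
where the two contributions to the coefficient of $e_{ij}$ come from the $y=i$ term of the $R_j$-sum and the $x=j$ term of the $L_i$-sum, and the last term comes from the products $e_{ji}e_{ij}=e_{jj}$ and $e_{ij}e_{ji}=e_{ii}$. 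The coefficient of $e_{ij}$ vanishes by the first relation of Lemma~\ref{xx3.2} (take $k=i$ there, using $j\leqslant i$), and since $i\neq j$ the basis elements $e_{ii}$ and $e_{jj}$ are distinct; comparing coefficients therefore forces $C^{ij}_{ji}=0$.

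With all flip coefficients eliminated, $D(e_{ij})$ reduces to the right-hand side of $(1)$ for every $e_{ij}\in\mathfrak{B}$ and the coefficient relations are exactly those required in Theorem~\ref{xx2.2}, so that theorem shows $D$ is a derivation. I expect the only real work to be the bookkeeping in the expansion of $D(e_{ij})e_{ij}+e_{ij}D(e_{ij})$, where one must check that indices such as $x=j$ and $y=i$ are genuinely admissible (they are, precisely because $j\leqslant i$); no new torsion hypothesis is needed at this stage, since $2$-torsion freeness was already consumed inside Lemma~\ref{xx3.2} and the final step uses only linear independence of distinct basis vectors. It is worth noting that when $X$ is a genuine poset the configuration ``$i\leqslant j$ and $j\leqslant i$ with $i\neq j$'' never occurs, so Lemma~\ref{xx3.2} already yields the conclusion; the argument above is exactly what is needed to pass from posets to pre-ordered sets.
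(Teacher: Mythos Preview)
Your argument is correct, and it takes a genuinely different route from the paper's. The paper subtracts off the derivation $d$ supplied by Theorem~\ref{xx2.2} to obtain a Jordan derivation $\Delta$ with $\Delta(e_{ii})=0$ and $\Delta(e_{ij})=C^{ij}_{ji}e_{ji}$, then restricts $\Delta$ to the subalgebra spanned by $e_{ii},e_{jj},e_{ij},e_{ji}\cong\mathrm{M}_2(\mathcal{R})$ and invokes the Jacobson--Rickart theorem to conclude that $\Delta$ is a derivation there, whence $C^{ij}_{ji}=0$. You instead apply the Jordan identity directly to $e_{ij}^2=0$ and read off $C^{ij}_{ji}=0$ from the coefficients of $e_{ii}$ and $e_{jj}$; the bookkeeping you outline is correct (in particular $j\in L_i$ and $i\in R_j$ exactly because $j\leqslant i$, and the first coefficient relation with $k=i$ kills the $e_{ij}$-term). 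Your approach is more elementary and fully self-contained, avoiding the external appeal to \cite{JaRi}; the paper's approach trades that directness for a conceptually clean reduction to a classical result. Your remark that $2$-torsion freeness is no longer needed at this final step, and that the whole issue disappears for genuine posets, is also accurate.
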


\begin{proof}
Let $D: I(X,\mathcal{R})\longrightarrow I(X,\mathcal{R})$ be a Jordan derivation. By Lemma \ref{xx3.2}
$$
D(e_{ij})=\sum_{x\in {L_i}}C^{ii}_{xi}e_{xj}+C^{ij}_{ij}e_{ij}+\sum_{y\in {R_j}}C^{jj}_{jy}e_{iy}+C^{ij}_{ji}e_{ji}
$$
for all $e_{ij}\in \mathfrak{B}$, where the coefficients $C^{ij}_{xy}$ are subject to the following relations
$$\begin{aligned}
C^{jj}_{jk}+C^{kk}_{jk}&=0, \hspace{6pt} \text{if}\hspace{6pt} j\leqslant k;\\
C^{ij}_{ij}+C^{jk}_{jk}&=C^{ik}_{ik}, \hspace{6pt} \text{if}\hspace{6pt} i\leqslant j,j\leqslant k.
\end{aligned}$$
We define an $\mathcal{R}$-linear operator $d$ by
$$
d(e_{ij})=\sum_{x\in {L_i}}C^{ii}_{xi}e_{xj}+C^{ij}_{ij}e_{ij}+\sum_{y\in {R_j}}C^{jj}_{jy}e_{iy}
$$
for all $e_{ij}\in \mathfrak{B}$, where the coefficients $C^{ij}_{xy}$ are subject to the following relations
$$\begin{aligned}
C^{jj}_{jk}+C^{kk}_{jk}&=0, \hspace{6pt} \text{if}\hspace{6pt} j\leqslant k;\\
C^{ij}_{ij}+C^{jk}_{jk}&=C^{ik}_{ik}, \hspace{6pt} \text{if}\hspace{6pt} i\leqslant j,j\leqslant k.
\end{aligned}$$
Then Theorem \ref{xx2.2} makes $d$ be a derivation. Then the operator $\Delta:=D-d$ is a Jordan derivation of
$I(X,\mathcal{R})$ satisfying
$$
\Delta(e_{ii})=0 \hspace{6pt}\text{and}\hspace{6pt} \Delta(e_{ij})=C^{ij}_{ji}e_{ji}
$$
for all $i\leqslant j$. Note that we have assumed $C^{xy}_{ji}=0$ if $j\nleqslant i$.
Restrict $\Delta$ to the subalgebra generated by $e_{ii},e_{jj},e_{ij},e_{ji}$ if $j\leqslant i$ which is isomorphic to
the full matrix algebra ${\rm M}_2(\mathcal{R})$.
Then \cite[Theorem 7 and Theorem 22]{JaRi} imply that $\Delta$ is a derivation and hence $C^{ij}_{ji}=0$.
This completes the proof of the theorem.
\end{proof}

\begin{remark}\label{xx3.4}
If $X$ is a finite partially ordered set (poset), then the incidence algebra $I(X,\mathcal{R})$ (also called
bigraph algebra or finite dimensional CSL algebra) is usually a triangular algebra (see \cite[P.1245]{XW}).
At this case, Theorem \ref{xx3.3} can be deduced from \cite[Theorem 3.2]{Lu} or \cite[Theorem 2.1]{ZY}.
\end{remark}

\end{document}